\newtheorem{theorem}{Theorem}[section] 
\newtheorem{thmx}{Theorem}
\newtheorem{lemma}[theorem]{Lemma}
\newtheorem{proposition}[theorem]{Proposition}
\newtheorem{corollary}[theorem]{Corollary}
\theoremstyle{definition} 
\newtheorem{example}[theorem]{Example}
\newtheorem{definition}[theorem]{Definition}
\newtheorem{lemmadefinition}[theorem]{Lemma-Definition}
\numberwithin{equation}{section} 
\newtheorem{remark}[theorem]{Remark}
\def\N{\operatorname{\mathbb{N}}}
\def\Z{\operatorname{\mathbb{Z}}}
\def\dim{\operatorname{dim}}
\def\div{\operatorname{Div}}
\def\gr{\operatorname{Gr}}
\def\fpt{\operatorname{fpt}}
\def\rank{\operatorname{rank}}
\def\dist{\operatorname{dist}}
\begin{document}
\title{The {$F$}-pure threshold of a Schubert cycle} 
\author{Justin Fong and Mitsuhiro Miyazaki}
\date{}
\address{Department of Mathematics, Purdue University, West Lafayette, IN
47907, USA}
\email{jafong1@gmail.com}
\address{Osaka Central Advanced Mathematical Institute,
Osaka Metropolitan University, 
3-3-138 Sugimoto, Sumiyoshi-ku Osaka 558-8585, Japan}
\email{mmyzk7@gmail.com}

\let\thefootnote\relax

\begin{abstract}
The $F$-pure threshold is the characteristic $p$ counter part of the log canonical threshold in characteristic zero. It is a numerical invariant associated to the singularities of a variety, hence computing its value is important. We give a closed formula for the $F$-pure threshold of the irrelevant maximal ideal of Schubert cycles, which are the homogeneous coordinate rings of Schubert subvarieties of a Grassmannian. The main point of the computation is to give an explicit formula for the $a$-invariant of a Schubert cycle. The derivation of both formulas is made possible through the combinatorics of the underlying poset of these rings.
\end{abstract} 

\maketitle

\section{Introduction}

The $F$-pure threshold is a numerical invariant associated to rings with $F$-pure singularities. $F$-pure singularities are an example class of $F$-singularities, which are classes of singularities defined in positive characteristic using the Frobenius map. It was first introduced in \cite{MR2097584} as the positive characteristic counterpart of the \emph{log canonical threshold} of an ideal in characteristic zero, which is a measurement of a singularity on the variety defined by the ideal. Although their definitions differ, many important connections have been established between these two invariants, one of which is that the log canonical threshold is approximated by the $F$-pure threshold through reduction modulo $p$ \cite{MR2185754}. 

The $F$-pure threshold is defined as follows: Let $\N=\{0,1,2\dots\}$, and $S=\Bbbk[x_1,\dots,x_n]$ be a polynomial ring over an $F$-finite field $\Bbbk$ of characteristic $p>0$ with irrelevant maximal  ideal $\mathfrak{m}_S=(x_1,\dots,x_n)$, and $I$ be a homogeneous ideal such that $R=S/I$ is an $F$-pure domain. For any ideal $J$ define its $p^e$-th bracket power $J^{[p^e]}=(j^{p^e}: j\in J)$. Given a homogeneous ideal $\mathfrak{a}$ of $R$ with preimage $J$ in $S$, for each $e\in\N$, define the integer 
\[\nu_e(\mathfrak{a}) = \max\left\{r\in\N : J^r(I^{[p^e]}:_SI)\nsubseteq \mathfrak{m}_S^{[p^e]} \right\}.\]

\begin{definition}
The \emph{$F$-pure threshold} of $\mathfrak{a}\subset R$ is defined to be the limit
\[\fpt(\mathfrak{a})=\lim_{e\to\infty}\frac{\nu_e(\mathfrak{a})}{p^e}.\] 
\end{definition}

When $\mathfrak{a}=\mathfrak{m}$ is the irrelevant maximal  ideal of $R$, we simply call $\fpt(\mathfrak{m})$ the \emph{$F$-pure threshold of $R$}. The values of $\fpt(\mathfrak{m})$ are relatively unknown, especially when $R$ is not Gorenstein, since when $R$ is Gorenstein, $\fpt(\mathfrak{m})=-a(R)$ (where $a(R)$ is the $a$-invariant of $R$), although successful computations of $\fpt(\mathfrak{m})$ have been done in \cite{MR2097584}, \cite{MR2549545}, \cite{MR2730291}, \cite{MR3354064}, \cite{MR3719471}, \cite{MR3836659}. In this paper, we give a formula for the $F$-pure threshold of the homogeneous coordinate ring of a Schubert variety defined by $\gamma$ inside a Grassmannian, shortened to \emph{Schubert cycle}, and denoted by $G_\gamma$ where $\gamma$ is a certain tuple of integers. We also give a closed formula for the $a$-invariant of $G_\gamma$. The main results of the paper are as follows.

\begin{thmx}[Theorem \ref{thm: F-pure threshold}, Corollary \ref{cor: a-invariant schubert}]\label{thm: main (restated)}
    Let $\gamma=[a_1,\dots,a_d]$ be a tuple of integers with $1\le a_1<\cdots<a_d\le n$, $[a_1, \ldots, a_d]\neq[n-d+1, \ldots, n]$, and let $\gamma=[\beta_0,\dots,\beta_s]$ be its decomposition into blocks of consecutive integers, where each block $\beta_i$ is followed by a gap $\chi_i$. For the homogeneous maximal ideal $\mathfrak{m}$ of $G_\gamma$, one has
    \begin{align*}
        \fpt(\mathfrak{m}) &= \min\left\{\sum_{j=0}^i |\beta_j|+\sum_{j=i}^t |\chi_j| : 0\leq i\leq t\right\}, \\
        -a(G_\gamma) &= \max\left\{\sum_{j=0}^i |\beta_j|+\sum_{j=i}^t |\chi_j| : 0\leq i\leq t\right\},
    \end{align*}
    where $t=s$ if $a_d<n$, and $t=s-1$ if $a_d=n$. In the equations above, $|\beta_j|$ and $|\chi_{j-1}|$ are the sizes of the blocks and gaps. 
\end{thmx}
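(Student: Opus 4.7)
The plan is to compute both $-a(G_\gamma)$ and $\fpt(\mathfrak{m})$ via the combinatorics of $G_\gamma$'s underlying poset structure. Throughout, I would exploit that $G_\gamma$ is a standard graded algebra with straightening law (ASL) on the distributive lattice $P_\gamma$ of Plücker coordinates $\tau\le\gamma$, ordered componentwise. Writing $P_\gamma = J(Q_\gamma)$ for its poset of join-irreducibles, the block-gap decomposition of $\gamma$ induces a ``staircase'' structure on $Q_\gamma$: each block $\beta_j$ contributes a vertical run of $|\beta_j|$ covering relations and each gap $\chi_i$ a horizontal run of $|\chi_i|$ covering relations. Each cut index $i\in\{0,\ldots,t\}$ corresponds to a specific saturated chain through this staircase of total length $\sum_{j=0}^i|\beta_j|+\sum_{j=i}^t|\chi_j|$.

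For the $a$-invariant, I would use a flat Gr\"obner degeneration of $G_\gamma$ to the Hibi ring $H_{P_\gamma}$; this preserves the Hilbert series and hence the $a$-invariant. Hibi-type formulas identify $-a(H_{P_\gamma})$ with the length of a longest saturated chain in $\hat Q_\gamma = Q_\gamma\cup\{\hat 0,\hat 1\}$. In the staircase, every maximal chain is determined by a cut index $i$, and maximizing over $i$ yields the claimed $\max$ formula.

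For the $F$-pure threshold, Schubert cycles are $F$-pure via the Frobenius splitting of the ambient Grassmannian, so I would compute $\nu_e(\mathfrak{m})$ directly using the standard monomial basis. The ideal $(I^{[p^e]}\!:\!I)$ decomposes as chain-monomials on $P_\gamma$, and its representatives modulo $\mathfrak{m}_S^{[p^e]}$ correspond to specific pivoting paths in the staircase. The maximum permissible degree is governed by the \emph{shortest} such path rather than the longest, and taking $\lim_{e\to\infty}\nu_e/p^e$ yields the $\min$ formula. This asymmetry reflects the generic non-Gorenstein nature of $G_\gamma$: while $-a$ detects the longest socle-generating chain, $\fpt$ is limited by the narrowest chain that is compatibly split by Frobenius.

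The main technical obstacle lies in the $\fpt$ computation, since $\fpt$ is not preserved under the Hibi degeneration in general and must be analyzed in $G_\gamma$ itself. Concretely, for each cut index $i$ one must exhibit a family of explicit test elements in $(I^{[p^e]}\!:\!I)\setminus\mathfrak{m}_S^{[p^e]}$ whose degree matches $\bigl(\sum_{j=0}^i|\beta_j|+\sum_{j=i}^t|\chi_j|\bigr)p^e$ to leading order, and one must prove that no element of strictly higher degree survives in the colon ideal modulo $\mathfrak{m}_S^{[p^e]}$. The combinatorial identification of this optimization with the minimum over cut indices follows from a careful analysis of the Frobenius map on the straightening relations, and this --- together with the matching Hibi-type argument for $-a$ --- constitutes the core of the proof.
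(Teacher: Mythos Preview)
Your treatment of $-a(G_\gamma)$ via Gr\"obner degeneration to the Hibi ring is a legitimate alternative and is in fact acknowledged in the paper's closing remark. The paper itself takes a different route: it applies the Bruns--Herzog principal-chain formula for the $a$-invariant of an ASL on a distributive lattice directly to $\Gamma(X;\gamma)$, and computes the length of the principal chain by an explicit inductive algorithm (after a convenient ``twist'' $a_i\mapsto n-a_{d-i+1}+1$). Your Hibi route is shorter once one has the relevant Hibi formula in hand; the paper's route stays entirely inside $G_\gamma$ and yields the same $\kappa=\max_i\kappa_i$. One caution: your assertion that \emph{every} maximal chain in $\hat Q_\gamma$ is indexed by a single cut $i$ is not literally correct; what is true is that the extremal chain lengths are realized among those ``corner'' chains.

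For $\fpt(\mathfrak{m})$, however, your proposal has a genuine gap. You plan to attack $(I^{[p^e]}\!:\!I)$ head-on with standard monomials, but you do not explain how ``$(I^{[p^e]}\!:\!I)$ decomposes as chain-monomials'' nor how to certify the upper bound (that no higher-degree element survives). The paper bypasses this entirely. The key lever you are missing is the divisorial structure of the canonical module: one has $\omega=\bigcap_{i=0}^t J_i^{(\kappa_i)}$, where the $J_i$ are the height-one primes attached to the upper neighbors of $\gamma$, and moreover $\gamma G_\gamma=\bigcap_{i=0}^t J_i$. Combining these in $\div(G_\gamma)$ gives, for any $m>0$,
\[
\gamma^{m\kappa}\,\omega^{(-m)}=\bigcap_{i=0}^t J_i^{(m(\kappa-\kappa_i))},
\]
an \emph{honest} intersection of ordinary powers (since $J_i^{(r)}=J_i^r$ here) of straightening-closed ideals, whose generating degree is read off immediately as $m(\kappa-\kappa')$. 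Hence $\omega^{(-m)}$ is generated in the single degree $-m\kappa'$, and a theorem of Singh--Takagi--Walther (Theorem~\ref{thm: fpt as limit} in the paper) then forces $\nu_e(\mathfrak{m})=\kappa'(p^e-1)$ and $\fpt(\mathfrak{m})=\kappa'$. This argument never touches $(I^{[p^e]}\!:\!I)$ directly and requires no case-by-case construction of test elements. Incidentally, your concern that $\fpt$ might not survive the Hibi degeneration turns out to be moot in this case---the paper notes that $G_\gamma$ and its initial Hibi ring share both $a$-invariant and $\fpt$---but without the divisorial computation above one has no way to see that in advance.
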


\section*{Acknowledgments}
The first author was partially supported by NSF Grant DMS-2100288. He would like to thank Aldo Conca, Matteo Varbaro, Linquan Ma, Vaibhav Pandey, and Uli Walther for helpful discussions. 

\section{Background}

In this section we introduce the necessary background and notations that are present throughout this paper. Let $R$ be an $\N$-graded Noetherian normal domain and $S$ denote the set of nonzero homogeneous elements of $R$. Then $S^{-1}R$ is a $\Z$-graded ring. Let $I$ be a graded fractional ideal in $S^{-1}R$. Then $I$ is said to be \emph{divisorial} if $I=R:_{S^{-1}R}(R:_{S^{-1}R} I)$, i.e., $I$ is reflexive as an $R$-module. The set of divisorial ideals of $R$, denoted $\div(R)$, form a group by the operation $I\cdot J=R:_{S^{-1}R}(R:_{S^{-1}R} IJ)$.
For $n\in\Z$ we denote $I^{(n)}$ as the $n$-th power of $I$ in $\div(R)$. Note that if $I\subsetneq R$ and $n>0$, then $I^{(n)}$ is the $n$-th symbolic power of $I$. If moreover $R$ is Cohen-Macaulay and $R_0$ is a field, then the graded canonical module $\omega$ can be realized as a graded fractionary ideal of $R$. Since $\omega$ is reflexive, $\omega$ is divisorial. We call $\omega^{(-1)}$ the \emph{anticanonical ideal}. 

The main theorem that we use to compute $F$-pure thresholds in this paper is the following:

\begin{theorem}\cite[Theorem 4.1]{MR3719471}\label{thm: fpt as limit}
    Let $S$ be an $n$-dimensional standard graded polynomial ring over an $F$-finite field $\Bbbk$ of characteristic $p>0$. Let $I$ be a homogeneous ideal such that $R=S/I$ is an $F$-pure normal domain, and let $\omega$ be the graded canonical module of $R$. For each $e\in\N$, suppose $\omega^{(1-p^e)}$ is generated in degrees $-d_1^{(e)}<\cdots<-d_{r_e}^{(e)}$. Then $\nu_e(\mathfrak{m})\in\{d_1^{(e)},\dots,d_{r_e}^{(e)}\}$.
\end{theorem}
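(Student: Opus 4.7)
My plan is to apply a Fedder-type duality to identify $\omega^{(1-p^e)}$ with a colon module, and then read off $\nu_e(\mathfrak{m})$ from the Artinian Gorenstein structure of $A := S/\mathfrak{m}_S^{[p^e]}$. The first step is the graded $R$-module isomorphism
$$\omega_R^{(1-p^e)} \cong \bigl((I^{[p^e]}:_S I)/I^{[p^e]}\bigr)\bigl(n(p^e-1)\bigr),$$
which combines Fedder's description $\Hom_R(F^e_* R, R) \cong F^e_*\bigl((I^{[p^e]}:_S I)/I^{[p^e]}\bigr)$ (exploiting the freeness of Frobenius on $S$) with graded local duality for the Cohen-Macaulay ring $R$, giving $\Hom_R(F^e_* R, R) \cong F^e_* \omega_R^{(1-p^e)}$ up to the twist $n(p^e-1)$ arising from $a(S) = -n$. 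In particular, minimal $R$-generators of $\omega_R^{(1-p^e)}$ in degrees $-d_i^{(e)}$ correspond to minimal $R$-generators of $(I^{[p^e]}:_S I)/I^{[p^e]}$ in degrees $n(p^e-1) - d_i^{(e)}$.

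The second step is to compute $\nu_e(\mathfrak{m})$ directly from the image $\bar J$ of $(I^{[p^e]}:_S I)$ in $A$. Since $A$ is Artinian Gorenstein with one-dimensional socle spanned by $(x_1\cdots x_n)^{p^e-1}$ in degree $n(p^e-1)$, a direct monomial argument gives, for any nonzero homogeneous $\bar u \in A$ of degree $d$,
$$\max\{r : \mathfrak{m}_S^r \bar u \neq 0\} = n(p^e-1) - d,$$
realized by picking a monomial $x^a$ appearing in $\bar u$ with all $a_i < p^e$ and multiplying by $\prod_i x_i^{p^e-1-a_i}$. Applying this to the submodule $\bar J$ yields $\nu_e(\mathfrak{m}) = n(p^e-1) - d_{\min}$, where $d_{\min}$ is the smallest degree of a nonzero homogeneous element of $\bar J$.

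The third step links $d_{\min}$ to the $d_i^{(e)}$. A degree-$d_{\min}$ nonzero element of $\bar J$ lifts to some homogeneous $u \in (I^{[p^e]}:_S I)$ of the same degree, and by graded Nakayama the lift can be refined so that $u$ is a minimal $R$-generator of $(I^{[p^e]}:_S I)/I^{[p^e]}$: expanding any nonzero element of minimal degree in terms of minimal generators forces at least one generator in that exact degree. By the first step this generator has degree $n(p^e-1) - d_i^{(e)}$ for some $i$, so $\nu_e(\mathfrak{m}) = d_i^{(e)}$. Note that $\nu_e(\mathfrak{m})$ need not coincide with the extremal index $d_1^{(e)}$, since some minimal generators of the colon module can already lie inside $\mathfrak{m}_S^{[p^e]}$ and vanish in $\bar J$. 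The main obstacle is the grading bookkeeping in the first step---in particular verifying the exact twist $n(p^e-1)$ and checking compatibility of the Frobenius action with the fractional-ideal structure on $\omega_R^{(1-p^e)}$---after which the second and third steps reduce to short monomial and graded Nakayama arguments.
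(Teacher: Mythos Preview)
The paper does not prove this theorem; it is quoted verbatim from \cite[Theorem 4.1]{MR3719471} and used as a black box. So there is no ``paper's own proof'' to compare against. Your outline is essentially the argument given in that reference: Fedder's description of $\Hom_R(F^e_*R,R)$ as $F^e_*\bigl((I^{[p^e]}:_S I)/I^{[p^e]}\bigr)$, the identification of $\Hom_R(F^e_*R,R)$ with $F^e_*\omega_R^{(1-p^e)}$ up to the twist by $n(p^e-1)$, and then the socle computation in the Artinian Gorenstein ring $S/\mathfrak{m}_S^{[p^e]}$.

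Two small points are worth tightening. First, you invoke ``graded local duality for the Cohen--Macaulay ring $R$'' in Step~1, but the hypotheses only give an $F$-pure normal domain, not Cohen--Macaulayness. The isomorphism $\Hom_R(F^e_*R,R)\cong F^e_*\omega_R^{(1-p^e)}$ still holds in this generality: both sides are reflexive $R$-modules (the left because $R$ is normal and $F^e_*R$ is finitely generated, the right because symbolic powers of divisorial ideals are divisorial), so it suffices to check the isomorphism in codimension one, where $R$ is regular and it is immediate. You should phrase Step~1 this way rather than appealing to local duality. Second, in Step~3 your Nakayama argument should be made precise: write the lift $u$ as an $S$-linear combination $u\equiv\sum f_jg_j \pmod{I^{[p^e]}}$ with the $g_j$ minimal homogeneous generators of $(I^{[p^e]}:_S I)/I^{[p^e]}$; since $\deg g_j\le d_{\min}$ and any $g_j$ of degree strictly less than $d_{\min}$ already lies in $\mathfrak{m}_S^{[p^e]}$ by minimality of $d_{\min}$, at least one $g_j$ must have degree exactly $d_{\min}$, and that generator witnesses $\nu_e(\mathfrak{m})=d_i^{(e)}$ for some $i$. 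With these clarifications the argument is complete and matches the original.
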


\subsection{Algebras with Straightening Laws}

For the definition and basic facts of \emph{algebras with straightening laws} (ASL for short), we consult \cite{MR680936} and \cite{MR986492}. However, we use the term algebra with straightening law to refer to what \cite{MR680936} calls the ordinal Hodge algebra. By \cite[Proposition 1.1]{MR680936}, we see the following.

\begin{lemma}\label{lem: std-rep-by-std-rel}
    Let $A$ be a graded ASL on $\Pi$ over $B$. If $x\in A$ is represented as a polynomial of elements in $\Pi$ with coefficients in $B$, x=$f(\xi_1,\dots,\xi_n)$, then the standard representation of $x$ is obtained by repeated application of the straightening relations of $A$. 
\end{lemma}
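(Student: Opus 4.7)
The plan is to reduce, by $B$-linearity, to the case of a single monomial in elements of $\Pi$, and then run a descending induction on a well-founded order on monomials under which each application of a straightening relation produces strictly smaller terms.

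By distributivity in the free polynomial algebra over $B$, expand $f(\xi_1,\ldots,\xi_n)$ as a $B$-linear combination of monomials $M=\eta_1\cdots\eta_k$ in elements of $\Pi$; this step is purely formal and uses no relations of $A$. It thus suffices to show that each such monomial $M$ admits a standard representation obtainable by iterated straightening. If the factors of $M$ already form a chain in $\Pi$, then $M$ is standard. Otherwise, choose incomparable factors $\xi,\eta$ of $M$ and apply the straightening relation
\begin{equation*}
\xi\eta=\sum_\ell c_\ell\mu_\ell,\qquad c_\ell\in B,
\end{equation*}
in which each $\mu_\ell$ is a standard monomial whose minimal factor is strictly below both $\xi$ and $\eta$ in $\Pi$, by the defining axiom of an ASL. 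Substituting rewrites $M$ as $\sum_\ell c_\ell M_\ell$ with $M_\ell=(M/\xi\eta)\mu_\ell$.

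To make the iteration terminate, equip the set of monomials in elements of $\Pi$ with a well-order that strictly decreases under each straightening replacement. A workable choice, following \cite[Proposition~1.1]{MR680936}, is to sort each monomial's factors in ascending order under a fixed linear extension of $\Pi$ and compare the resulting tuples lexicographically; homogeneity of straightening in the graded ASL keeps the operation within a single graded component, so comparisons happen among finitely many monomials in $\Pi$ of bounded shape. Induction on this well-order reduces each $M_\ell$, and hence $M$, to a $B$-linear combination of standard monomials. Since standard monomials form a $B$-basis of $A$ by the ASL axioms, the resulting expression must coincide with the unique standard representation of $x$.

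The principal technical obstacle is verifying the strict decrease of the invariant at each straightening step. The ASL axiom constrains only the smallest factor of each $\mu_\ell$, while the remaining factors $\zeta_{\ell,2},\zeta_{\ell,3},\ldots$ are a priori unconstrained and can sit above $\xi$ or $\eta$ in the linear extension, so one must arrange the invariant (e.g.\ via a sufficiently refined multiset order) so that the guaranteed drop at the bottom dominates any possible gain at higher positions. This verification is precisely the content of \cite[Proposition~1.1]{MR680936}, from which Lemma~\ref{lem: std-rep-by-std-rel} is an immediate consequence.
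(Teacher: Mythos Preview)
Your proposal is correct and aligns with the paper's own treatment: the paper does not give an independent proof but simply records the lemma as a consequence of \cite[Proposition~1.1]{MR680936}, which is exactly the reference you invoke after sketching the standard termination argument. Your sketch of the inductive straightening procedure is the content of that proposition, so there is nothing to add.
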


Let $\Pi$ be a poset and $\Omega$ a subset of $\Pi$.
$\Omega$ is called a \emph{poset ideal} of $\Pi$ if whenever $\alpha\in\Omega$, $\beta\in \Pi$, $\beta<\alpha$, one has $\beta\in\Omega$.
We see by \cite[Proposition 1.2]{MR680936} the following.

\begin{lemma}\label{lem: sub-ASL}
    Let $A$ be a graded ASL on $\Pi$ over $B$ and $\Omega$ a poset ideal of $\Pi$. Set $I=A\Omega$. Then $A/I$ naturally has a graded ASL structure on $\Pi\setminus\Omega$ over $B$.
\end{lemma}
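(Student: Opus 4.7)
The plan is to verify the three axioms of a graded ordinal Hodge algebra (ASL) for $A/I$ on $\Pi\setminus\Omega$ over $B$. The crux is to identify $I=A\Omega$ concretely as a $B$-submodule in terms of standard monomials; once this is done, the free-basis property and the straightening relations both descend to $A/I$ by reduction modulo $I$.

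Concretely, I would prove that $I$ equals the $B$-submodule $J$ spanned by those standard monomials $\gamma_1\le\cdots\le\gamma_r$ on $\Pi$ whose smallest factor $\gamma_1$ lies in $\Omega$. The inclusion $J\subseteq I$ is immediate, since every such monomial is divisible by $\gamma_1\in\Omega$. For the reverse inclusion, it suffices to show that for every $\omega\in\Omega$ and every standard monomial $M=\gamma_1\cdots\gamma_r$ on $\Pi$, the product $\omega M$ lies in $J$. By Lemma \ref{lem: std-rep-by-std-rel}, $\omega M$ reduces to standard form via iterated application of the straightening relations. The key invariant is the preservation of the property ``minimum factor in $\Omega$'': whenever one straightens an incomparable pair $(\alpha,\beta)$ with $\alpha\in\Omega$, the ordinal Hodge axiom writes $\alpha\beta=\sum c_N N$ with each standard monomial $N=\delta_1\cdots\delta_s$ satisfying $\delta_1<\alpha$, and since $\Omega$ is a poset ideal, $\delta_1\in\Omega$. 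Tracking this through the reduction shows $\omega M\in J$.

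With $I$ identified, the $B$-basis of $A$ by standard monomials on $\Pi$ splits as the direct sum of $I$ and the $B$-span of the standard monomials whose minimum lies in $\Pi\setminus\Omega$; because $\Pi\setminus\Omega$ is upward-closed in $\Pi$, the latter are exactly the standard monomials on $\Pi\setminus\Omega$. Their images therefore form a free $B$-basis of $A/I$. Since $\Omega$ consists of homogeneous elements, $I$ is a homogeneous ideal, so $A/I$ inherits the grading, and each element of $\Pi\setminus\Omega$ has a nonzero homogeneous image in $A/I$ (being itself a basis element). Finally, for the straightening axiom, take incomparable $\alpha,\beta\in\Pi\setminus\Omega$ and use the relation $\alpha\beta=\sum c_M M$ in $A$; each $M=\gamma_1\cdots\gamma_r$ satisfies $\gamma_1<\alpha$ and $\gamma_1<\beta$, and after reducing modulo $I$ the terms with $\gamma_1\in\Omega$ disappear, leaving a valid straightening relation in $A/I$ on $\Pi\setminus\Omega$.

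The main obstacle is the first step: controlling the standard expansion of $\omega M$ during the iterative straightening and verifying that ``minimum factor in $\Omega$'' is preserved at every stage. The poset-ideal hypothesis on $\Omega$ together with the ordinal Hodge condition on straightening relations is precisely what makes this invariant persist; without either hypothesis it could fail. Once this identification of $I$ is in hand, the descent of the basis and of the straightening relations to $A/I$ is formal.
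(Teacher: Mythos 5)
Your proposal is correct, and it reconstructs what is essentially the standard De Concini--Eisenbud--Procesi argument (\cite[Proposition~1.2]{MR680936}); the paper itself gives no proof and simply cites that reference. The key identification of $I=A\Omega$ with the $B$-span of standard monomials having a factor in $\Omega$ is exactly the right move, and the poset-ideal hypothesis is what makes both the basis statement and the descent of the straightening relations work. One small point worth making explicit: during the iterated straightening the intermediate monomials need not be standard, so the invariant you track should be phrased as ``some factor lies in $\Omega$'' rather than ``the minimum lies in $\Omega$''; these agree for standard monomials (since $\Omega$ is a poset ideal), and the weaker phrasing is clearly preserved whether the pair you straighten contains a factor from $\Omega$ or not (in the latter case the $\Omega$-factor is simply carried along unchanged). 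With that clarification, the argument is complete.
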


\begin{definition}
    Let $A$ be a graded ASL on a poset $\Pi$ over a commutative ring $B$. Suppose $\Omega$ is a poset ideal in $\Pi$, and let $I=A\Omega$. We say that $I$ (or $\Omega$) is \emph{straightening-closed} if every standard monomial appearing in the staightening relation of $\delta\tau$ for any $\delta,\tau\in\Omega$ incomparable in $\Pi$ have at least two factors in $\Omega$.
\end{definition}

\begin{lemma}\label{lem: straightening-closed}
    Suppose $A$ is a graded ASL on a poset $\Pi$ over a commutative ring $B$, $\Omega$ a straightening-closed poset ideal of $\Pi$, and $I=A\Omega$. Let $u\in\N$. If $z$ is an arbitrary element of $A$, then $z\in I^u$ if and only if every standard monomial appearing in the standard representation of $z$ contains at least $u$ factors in $\Omega$. 
\end{lemma}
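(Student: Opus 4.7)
The plan is to prove the two directions separately. The \emph{if} direction is immediate: if the standard representation of $z$ has every monomial containing at least $u$ factors in $\Omega$, then each such monomial is a product of $u$ elements of $\Omega$ with an element of $A$, hence lies in $I^u = (A\Omega)^u$, and so $z \in I^u$.

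For the \emph{only if} direction, suppose $z \in I^u$. Then $z$ can be written as a $B$-linear combination of products of the form $\delta_1\cdots\delta_u\,a$ with $\delta_j \in \Omega$ and $a \in A$. Expanding each $a$ as a $B$-polynomial in elements of $\Pi$ expresses $z$ as a $B$-linear combination of (not necessarily standard) monomials in $\Pi$, each containing at least $u$ factors from $\Omega$. By Lemma~\ref{lem: std-rep-by-std-rel}, the standard representation of $z$ is obtained by repeated application of the straightening relations. It therefore suffices to show that the property ``every monomial in the expression contains at least $u$ factors in $\Omega$'' is preserved under a single straightening step; the conclusion then follows by induction on the number of such steps.

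To verify preservation, consider replacing an incomparable pair $\delta\tau$ occurring in a monomial $M = \delta\tau M'$ by its straightening $\sum c_\mu \mu$, where by the ASL axioms each standard monomial $\mu$ on the right has smallest factor $\mu_1$ strictly below both $\delta$ and $\tau$ in $\Pi$. I split into three cases based on $|\{\delta,\tau\}\cap\Omega|$. If $\delta,\tau \in \Omega$, the straightening-closed hypothesis directly guarantees that each $\mu$ has at least two factors in $\Omega$, so the count of $\Omega$-factors in the resulting monomials $\mu M'$ is at least the count in $M$. If exactly one of $\delta,\tau$ lies in $\Omega$, then $\mu_1$ is strictly smaller than an element of $\Omega$, and the poset ideal property of $\Omega$ forces $\mu_1 \in \Omega$; the single $\Omega$-factor lost from $\{\delta,\tau\}$ is replaced by $\mu_1$, so the count is preserved. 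If neither $\delta$ nor $\tau$ lies in $\Omega$, the $\Omega$-factors of $M$ are entirely contained in $M'$ and are untouched.

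The main obstacle is the middle case, which relies critically on both the poset ideal property of $\Omega$ and the ASL axiom that the smallest factor appearing in a straightening relation is strictly below the incomparable pair being straightened. Once this three-case analysis is complete, the inductive argument is routine, and passing to the terminus of the straightening process yields the standard representation of $z$ with the required property.
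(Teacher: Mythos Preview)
Your proof is correct and follows essentially the same approach as the paper's: both reduce to checking that a single application of a straightening relation does not decrease the number of $\Omega$-factors in a monomial, via the same case split on how many of the two incomparable factors lie in $\Omega$, and then invoke Lemma~\ref{lem: std-rep-by-std-rel}. Your write-up is slightly more detailed in that you also spell out the trivial \emph{if} direction and the ``neither in $\Omega$'' case, but there is no substantive difference.
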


\begin{proof}
    Suppose $\alpha,\beta\in\Pi$ are incomparable, and let $\alpha\beta=\sum_\mu c_\mu\mu$ be the straightening relation. If one of $\alpha$ or $\beta$ belongs to $\Omega$, then every standard monomial $\mu$ appearing in the straightening relation of $\alpha\beta$ contains a factor from $\Omega$, since each $\mu$ contains a factor $\zeta\in\Pi$ with $\zeta\le\alpha$ and $\zeta\le\beta$, and $\Omega$ is a poset ideal. If both $\alpha$ and $\beta$ belong to $\Omega$, then by the straightening-closed property of $\Omega$, each standard monomial $\mu$ in the straightening relation of $\alpha\beta$ contains at least two factors from $\Omega$. 
    
    By Lemma \ref{lem: std-rep-by-std-rel}, the standard representation of an element in $A$ can be obtained by repeated use of the straightening relations. Therefore, given a monomial in $\Pi$ with at least $u$ factors in $\Omega$, every standard monomial appearing in its standard representation has at least $u$ factors in $\Omega$. 
\end{proof}

\begin{remark}
    The statement of Lemma \ref{lem: straightening-closed} is equivalent to the fact that $I^u$ is generated as a $B$-module by the standard monomials containing at least $u$ factors in $\Omega$. In particular, as an ideal of $A$, $I^u$ is generated by the standard monomials of length $u$ in $\Omega$.
\end{remark}

\begin{proposition}\label{prop: same gen degree}
    Suppose $A$ is a graded ASL on a poset $\Pi$ over a commutative ring $B$. Let $\Omega_1,\dots,\Omega_n$ be straightening-closed poset ideals of $\Pi$, and set $I_i=A\Omega_i$ for $i=1,\dots,n$. For any $u_1,\dots,u_n\in\N$, the ideal $\bigcap_{i=1}^nI_i^{u_i}$ of $A$ is generated by
    \begin{equation}\label{equ: standard monomial set}
        \{\xi_1\xi_2\cdots\xi_u  : \xi_1\le\xi_2\le\dots\le\xi_u,\
        \xi_1,\xi_2,\dots,\xi_{u_i}\in\Omega_i \ \text{for $1\le i\le n$}\}
    \end{equation}
    where $u=\max\{u_1,\dots,u_n\}$. In particular, if $A$ is a homogeneous ASL, then $\bigcap_{i=1}^nI_i^{u_i}$ is generated in degree $u$. 
\end{proposition}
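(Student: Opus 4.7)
The plan is to leverage Lemma \ref{lem: straightening-closed} for each ideal $I_i^{u_i}$ individually, then combine these membership criteria across the intersection. Because every element $z \in A$ admits a unique standard representation, and $z$ lies in $I_i^{u_i}$ precisely when each standard monomial appearing in that representation contains at least $u_i$ factors from $\Omega_i$, it follows directly that $z \in \bigcap_{i=1}^n I_i^{u_i}$ if and only if every standard monomial appearing in the standard representation of $z$ contains, for each $i$, at least $u_i$ factors from $\Omega_i$. Hence the intersection is generated as a $B$-module (and a fortiori as an ideal) by those standard monomials $\xi_1 \le \xi_2 \le \cdots \le \xi_m$ satisfying this collection of constraints.

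The critical observation is a poset-ideal simplification: since each $\Omega_i$ is downward closed, $\xi_k \in \Omega_i$ forces $\xi_1,\dots,\xi_k \in \Omega_i$. Therefore the condition ``at least $u_i$ factors of $\xi_1\le\cdots\le\xi_m$ lie in $\Omega_i$'' is equivalent to ``$\xi_1,\dots,\xi_{u_i} \in \Omega_i$,'' and in particular requires $m \ge u_i$. Taking the maximum over $i$ gives $m \ge u$, so I can factor the standard monomial as $(\xi_1\cdots\xi_u)(\xi_{u+1}\cdots\xi_m)$. The leading segment $\xi_1\cdots\xi_u$ is a standard monomial of length exactly $u$ whose initial $u_i$ factors lie in $\Omega_i$ for each $i$, so it is a member of the generating set \eqref{equ: standard monomial set}; hence the original monomial lies in the ideal generated by \eqref{equ: standard monomial set}. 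Conversely, by Lemma \ref{lem: straightening-closed} every element of \eqref{equ: standard monomial set} already lies in $\bigcap_{i=1}^n I_i^{u_i}$, so the two ideals coincide.

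For the ``in particular'' clause, a homogeneous ASL assigns a common degree (conventionally $1$) to each element of $\Pi$, so every element of \eqref{equ: standard monomial set} has degree $u$, whence $\bigcap_i I_i^{u_i}$ is generated in degree $u$.

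There is no serious obstacle here: the entire argument is a bookkeeping exercise once Lemma \ref{lem: straightening-closed} is in hand, with the single mildly non-trivial ingredient being the poset-ideal reduction that converts the ``at least $u_i$ factors'' condition into the ``first $u_i$ factors'' condition, thereby exposing the prefix of length $u$ that sits inside \eqref{equ: standard monomial set}.
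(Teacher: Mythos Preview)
Your proof is correct and follows essentially the same approach as the paper: both invoke Lemma \ref{lem: straightening-closed} to characterize membership in each $I_i^{u_i}$ via the standard representation, then intersect these conditions. You make explicit the poset-ideal reduction (from ``at least $u_i$ factors in $\Omega_i$'' to ``the first $u_i$ factors lie in $\Omega_i$'') and the truncation to a length-$u$ prefix, which the paper leaves implicit, but the argument is the same.
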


\begin{proof}
    We show that $I_1^{u_1}\cap\cdots\cap I_n^{u_n}$ is generated by the set of standard monomials (\ref{equ: standard monomial set}) as an ideal in $A$. It is clear that the set (\ref{equ: standard monomial set}) is contained in $I_1^{u_1}\cap\cdots\cap I_n^{u_n}$.

    Let $x\in A$ be an arbitrary non-zero element and
    \[x=\sum_j b_j\xi_{j1}\xi_{j2}\cdots \xi_{j r_j} \quad (\xi_{j1}\leq \xi_{j2}\leq\cdots \leq \xi_{j r_j})\]
    be the standard representation of $x$. By Lemma \ref{lem: straightening-closed}, for each $1\le i\le n$, one has $x\in I_i^{u_i}$ if and only if $\xi_{j1},\xi_{j2},\dots,\xi_{j u_i}\in \Omega_i$ for any $j$. Thus, $I_1^{u_1}\cap\cdots\cap I_n^{u_n}$ is contained in the ideal generated by the set (\ref{equ: standard monomial set}). 
\end{proof}

\subsection{Schubert Cycles}\label{sub-sec: schubert cycles}

Suppose $\Bbbk$ is a field of arbitrary characteristic. Let $X=(x_{ij})$ be a generic $d\times n$ matrix with $1\le d\le n$. The homogeneous coordinate ring of the Grassmannian $\gr(d,V)$ of $d$-dimensional subspaces of $V$, where $V$ is an $n$-dimensional vector space over $\Bbbk$, with respect to the Pl\"{u}cker embedding, is the $\Bbbk$-algebra $G(X)$ generated by the maximal minors of $X$. The maximal minors are identified with a tuple of their column indices $[a_1,\dots,a_d]$, hence the set of maximal minors of $X$ is 
\begin{equation}
    \Gamma(X)=\{[a_1,\dots,a_d] : 1\le a_1<\cdots<a_d\le n\},
\end{equation}
which is a poset with respect to the partial order
\[[a_1,\dots,a_d]\ge[b_1,\dots,b_d] \ \Leftrightarrow \ a_i\ge b_i \ \text{for all $i$}.\]
It is easily verified that $\Gamma(X)$ is a distributive lattice. 

Let $0=V_0\subset V_1\subset\cdots\subset V_n=V$ be a flag of subspaces of $V$ with $\dim_{\Bbbk}V_i=i$. For a fixed $\delta=[b_1,\dots,b_d]\in\Gamma(X)$, the corresponding \emph{Schubert variety} inside the Grassmannian $\gr(d,V)$ is the subvariety
\[\Omega(\delta) = \{W\in \gr(d,V) : \dim_{\Bbbk}(W\cap V_{b_i})\ge i,\ \text{for $i=1,\dots,d$}\}.\]

\begin{lemmadefinition}
Fix $\gamma=[a_1,\dots,a_d]\in\Gamma(X)$ and set $b_i=n-a_{d-i+1}+1$ for $i=1,\dots,d$. Let $\tilde{\gamma}=[b_1,\dots,b_d]$. The homogeneous coordinate ring of the Schubert subvariety $\Omega(\tilde{\gamma})$ of $\gr(d,V)$ (with respect to the Pl\"{u}cker embedding), called a \emph{Schubert cycle}, is 
\begin{equation}
    G(X;\gamma) = G(X)/J(X;\gamma),
\end{equation}
where $J(X;\gamma) = \{\delta\in\Gamma(X): \delta\ngeq\gamma\}G(X)$ is the ideal generated by the poset ideal $\{\delta\in\Gamma(X): \delta\ngeq\gamma\}$ in $G(X)$. 
\end{lemmadefinition}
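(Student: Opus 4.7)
The plan is to show that $J(X;\gamma)$ is precisely the homogeneous defining ideal of $\Omega(\tilde\gamma)$ under the Pl\"ucker embedding, from which the identification $G(X;\gamma)=G(X)/J(X;\gamma)$ with the Schubert cycle's coordinate ring is immediate. First I would note that $\{\delta\in\Gamma(X):\delta\not\geq\gamma\}$ is a poset ideal of $\Gamma(X)$: if $\delta'\leq\delta$ and $\delta'\geq\gamma$, then $\gamma\leq\delta$ by transitivity. Since $G(X)$ is classically a graded ASL on the distributive lattice $\Gamma(X)$ over $\Bbbk$, Lemma~\ref{lem: sub-ASL} then endows $G(X;\gamma)$ with a graded ASL structure on the subposet $\{\delta\in\Gamma(X):\delta\geq\gamma\}$, whose $\Bbbk$-basis is the standard monomials all of whose factors dominate $\gamma$.

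Next I would establish the geometric equivalence
\[
p_\delta\in I(\Omega(\tilde\gamma))\ \iff\ \delta\not\geq\gamma.
\]
To align the rank conditions defining $\Omega(\tilde\gamma)$ with the poset order on $\Gamma(X)$, fix a basis $e_1,\dots,e_n$ of $V$ with $V_j=\mathrm{span}(e_{n-j+1},\dots,e_n)$. For $(\Leftarrow)$, suppose $\delta=[c_1,\dots,c_d]$ satisfies $c_i<a_i$ for some $i$. Using $b_{d-i+1}=n-a_i+1$, the Schubert condition $\dim_{\Bbbk}(W\cap V_{b_{d-i+1}})\geq d-i+1$ translates, for any $d\times n$ matrix $M$ representing $W\in\Omega(\tilde\gamma)$, into the rank of the first $a_i-1$ columns of $M$ being at most $i-1$; Laplace expansion of $p_\delta(W)$ along its first $i$ columns (whose indices $c_1<\cdots<c_i<a_i$ all lie in $\{1,\dots,a_i-1\}$) then forces $p_\delta(W)=0$. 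For $(\Rightarrow)$, when $\delta\geq\gamma$ the coordinate subspace $W_\delta=\mathrm{span}(e_{c_1},\dots,e_{c_d})$ lies in $\Omega(\tilde\gamma)$ (each Schubert inequality follows from $c_{d-i+1}\geq a_{d-i+1}$) and satisfies $p_\delta(W_\delta)=\pm 1\neq 0$.

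These two facts give the inclusion $J(X;\gamma)\subseteq I(\Omega(\tilde\gamma))$ and therefore a graded surjection $G(X;\gamma)\twoheadrightarrow\Bbbk[\Omega(\tilde\gamma)]$. For the reverse inclusion — the main obstacle — I would invoke the classical standard basis theorem (Hodge, De Concini--Eisenbud--Procesi, Bruns--Vetter), which asserts that the standard monomials supported in $\{\delta\geq\gamma\}$ remain linearly independent as functions on $\Omega(\tilde\gamma)$. Since these same monomials form a $\Bbbk$-basis of $G(X;\gamma)$ by the ASL structure established above, the surjection must be injective, so $J(X;\gamma)=I(\Omega(\tilde\gamma))$ and $G(X;\gamma)$ is the homogeneous coordinate ring of $\Omega(\tilde\gamma)$ as claimed.
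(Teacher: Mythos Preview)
Your argument is correct and is essentially the classical proof of this identification (as in Hodge, De~Concini--Eisenbud--Procesi, or Bruns--Vetter \cite{MR986492}). Note, however, that the paper does not supply a proof of this Lemma-Definition at all: it is stated as a known fact, with the ASL structure on $G(X;\gamma)$ then recovered via Lemma~\ref{lem: sub-ASL}, and the deeper structural properties of Schubert cycles imported wholesale from \cite{MR986492}. So there is nothing in the paper to compare your argument against beyond that citation; your outline simply fills in what the paper leaves to the literature.

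One minor remark: your $(\Rightarrow)$ step, showing $p_\delta\notin I(\Omega(\tilde\gamma))$ when $\delta\geq\gamma$, is not strictly needed once you invoke the standard basis theorem for injectivity of the surjection $G(X;\gamma)\twoheadrightarrow\Bbbk[\Omega(\tilde\gamma)]$---linear independence of the standard monomials on $\Omega(\tilde\gamma)$ already forces each individual $p_\delta$ with $\delta\geq\gamma$ not to vanish identically---though including it does no harm and makes the set-level equivalence transparent.
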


By Lemma \ref{lem: sub-ASL}, we see that $G(X;\gamma)$ is a graded ASL on $\Gamma(X)\setminus\{\delta\in\Gamma(X): \delta\ngeq\gamma\}$. Note that
\begin{equation}
    \Gamma(X)\setminus\{\delta\in\Gamma(X): \delta\ngeq\gamma\}=\{\delta\in\Gamma(X): \delta\ge\gamma\}.
\end{equation}
We denote this set by $\Gamma(X;\gamma)$. The Schubert cycle $G(X;\gamma)$ is a $\Bbbk$-algebra generated by $\Gamma(X;\gamma)$. Both $G(X)$ and $G(X;\gamma)$ are ASLs on the posets $\Gamma(X)$ and $\Gamma(X;\gamma)$ respectively over $\Bbbk$.

Throughout this article, we denote $G(X;\gamma)$ by $G_\gamma$. Over an algebraically closed field of positive characteristic, it has been proven in \cite{MR799251} that Schubert varieties are $F$-split, hence the ring $G_\gamma$ is $F$-pure over such a field. It follows that if $G_\gamma$ is defined over an arbitrary field $\Bbbk$ of positive characteristic, one has that $G_\gamma\otimes_\Bbbk \overline{\Bbbk}$ is $F$-pure, where $\overline{\Bbbk}$ is the algebraic closure of $\Bbbk$. Since $F$-purity decends under pure maps, it follows that $G_\gamma$ is $F$-pure over $\Bbbk$. Therefore, the notion of $F$-pure threshold is defined for $G_\gamma$.

Given $\gamma=[a_1,\dots,a_d]\in\Gamma(X)$, one may express 
\[\gamma=[\beta_0,\dots,\beta_s], \enspace \text{where}\enspace \beta_i = a_{k_i+1},\ a_{k_i+1}+1,\dots,a_{k_{i+1}}-1,\ a_{k_{i+1}}\]
for $0\le i\le s$, as a sequence of blocks of consecutive integers, where $k_0=0$. 
We set $t=s$ if $a_d<n$, and $t=s-1$ if $a_d=n$. After each block $\beta_i$ there is a \emph{gap}
\[\chi_i = a_{k_{i+1}}+1,\dots,a_{k_{i+1}+1}-1,\]
which is a block of consecutive integers \emph{not} appearing in $\gamma$, where $a_{d+1}=n+1$. When $a_d=n$, the last gap $\chi_s$ is empty. We remark that the sizes of the blocks and gaps of $\gamma$ can be used to characterize the Gorenstein property of $G_\gamma$, that is, $G_\gamma$ is Gorenstein if and only if $|\chi_{i-1}|=|\beta_i|$ for all $i=1,\dots,t$ \cite[Corollary 8.13]{MR986492}.

If $\Pi$ is an arbitrary poset, and $x,y\in\Pi$, then $y$ is a \emph{cover} of $x$, denoted $x\lessdot y$, if $x<y$ and there is no $z\in\Pi$ such that $x<z<y$. An \emph{upper neighbor} $\zeta$ of $\gamma$ is an element of $\Gamma(X)$ which covers $\gamma$. Upper neighbors of $\gamma$ are 
\[\zeta_i = [\beta_0,\dots,\tilde{\beta}_i,\dots,\beta_s],\quad \tilde{\beta}_i = a_{k_i+1},\ a_{k_i+1}+1,\dots,a_{k_{i+1}}-1,\ a_{k_{i+1}}+1\]
for $0\le i\le t$. Note that $\tilde{\beta}_i$ is obtained by increasing the last entry of $\beta_i$ by 1.

Set the following  
\begin{align*}
    J(X;\zeta_i) &= \{\delta\in\Gamma(X) : \delta\ngeq\zeta_i\}G(X), \\
    J_i &= J(X;\zeta_i)/J(X;\gamma), \\
    \Gamma(X;\zeta_i)&=\{\delta\in\Gamma(X) : \delta\geq\zeta_i\}, \\
    \Omega_i &= \Gamma(X;\gamma)\setminus\Gamma(X;\zeta_i)
\end{align*}
for $0\le i\le t$. 
Then $J_i=\Omega_iG_\gamma$ and are distinct height 1 prime ideals for $0\le i\le t$. 
Further, by \cite[Propositions 5.2, 5.10]{MR986492} it holds that 
\begin{equation}\label{equ: principal intersection}
    \gamma G_\gamma=J_1\cap\cdots\cap J_t.
\end{equation}
Further, by \cite[Corollary 9.18]{MR986492} we see that $J_i^{(m)}=J_i^m$ for any positive integer $m$ and $0\le i\le t$. 
Moreover, by \cite[Lemma 9.1]{MR986492}, we see the following.

\begin{lemma}\label{lem: straightening-closed-schubert}
    $\Omega_i$ is straightening closed for $0\le i\le t$.
\end{lemma}

\subsection{Summary for computing $F$-pure threshold}
We outline the computation of the $F$-pure threshold of $G_\gamma$ in the list below.
\begin{itemize}
    \item [1.] Set the following,
        \begin{align}
            \kappa_i &= \sum_{j=0}^i|\beta_j|+\sum_{j=i}^t|\chi_j| \enspace\text{for $0\le i\le t$}, \label{equ: kappa numbers} \\
            \kappa &= \max\{\kappa_i : 0\le i\le t\}, \\
            \kappa' &= \min\{\kappa_i : 0\le i\le t\}, \\
            I &= \bigcap_{i=0}^t J_i^{(\kappa_i)}.
        \end{align}
    It follows from \cite{MR986492} that $I$ is isomorphic to the canonical module of $G_\gamma$ if one ignores grading.
    \item[2.] Compute $a(G_\gamma)$ by \cite{MR1188581} and show 
            $-a(G_\gamma)=\kappa$. As a consequence, one sees that $I$ is isomorphic to the canonical module of $G_\gamma$ as graded modules. 
            
    \item[3.] For $e>0$, it holds that
            $\gamma^{\kappa(p^e-1)}\omega^{(1-p^e)}=\bigcap_{i=0}^t J_i^{((\kappa-\kappa_i)(p^e-1))}$.
            In particular, $\gamma^{\kappa(p^e-1)}\omega^{(1-p^e)}$ is generated in degree 
            $(\kappa-\kappa')(p^e-1)$, and hence $\omega^{(1-p^e)}$ is generated in degree $\kappa'(1-p^e)$.
            
    \item[4.] It follows $\fpt(\mathfrak{m})=\kappa'$.
\end{itemize}

In the remainder of the article, we go through the details of each point.


\section{The \texorpdfstring{$a$}{a}-invariant of Schubert Cycles}\label{sec: a-invariant}

In this section we compute the $a$-invariant of Schubert cycles $G_\gamma$ by using a formula of Bruns and Herzog in \cite{MR1188581} that computes the $a$-invariant of an ASL. The \emph{$a$-invariant} of a positively graded Cohen-Macaulay ring $R$ over a field $\Bbbk$ with canonical module $\omega_R$ is defined to be
\[a(R) = -\min\{j : (\omega_R)_j \ne 0\}.\]
See \cite{MR494707}.

The \emph{principal chain} of a distributive lattice $\Pi$ is a chain $\mathcal{P}(\Pi)=\xi_1,\dots,\xi_m$ formed inductively as follows: $\xi_1$ is the unique minimal element of $\Pi$. If $\xi_1,\xi_2,\dots,\xi_i$ is defined and $\xi_i$ is not the unique maximal element of $\Pi$,
\[\xi_{i+1}=\nu_1\sqcup\cdots\sqcup\nu_v,\]
where $\nu_1,\dots,\nu_v$ are elements which cover $\xi_i$, and $\sqcup$ denotes the join in $\Pi$. If $\xi_i$ is the maximal element of $\Pi$, then the inductive construction terminates and $\xi_1,\dots,\xi_i$ is the principal chain. By using the principal chain, the $a$-invariant of an ASL is given as follows.

\begin{theorem}\cite[Theorem 1.1]{MR1188581}\label{thm: ASL a-invariant}
Let $R$ be a graded ASL on a distributive lattice $\Pi$ with principal chain $\mathcal{P}(\Pi)=\xi_1,\dots,\xi_m$. Then its $a$-invariant is given by
\begin{equation}\label{equ: a-invariant of schubert cycle}
    a(R) = -\sum_{i=1}^m\deg\xi_i.
\end{equation}
\end{theorem}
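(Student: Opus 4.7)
The plan is to identify the lowest-degree generator of the graded canonical module $\omega_R$, since $a(R) = -\min\{j : (\omega_R)_j \neq 0\}$ and $R$ is Cohen-Macaulay (ASLs on distributive lattices are classically known to be Cohen-Macaulay). The distributive lattice structure of $\Pi$ should then single out the principal chain as the optimal candidate.

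First I would reduce to a computation on the associated discrete Hodge algebra. The ASL $R$ admits a flat deformation to the Stanley-Reisner ring of the order complex of $\Pi$ that preserves the Hilbert series, and hence preserves the $a$-invariant. For the Stanley-Reisner ring, the graded canonical module has a classical description as the $\Bbbk$-span of ``interior'' standard monomials, with explicit degree formulas indexed by chains of $\Pi$. This reduces the theorem to a combinatorial assertion about the minimal-degree interior monomial supported on a chain.

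Next I would establish the extremal property of the principal chain. The defining recursion $\xi_{i+1} = \nu_1 \sqcup \cdots \sqcup \nu_v$, combined with distributivity of $\Pi$, shows that $\xi_{i+1}$ is the unique smallest element lying above every cover of $\xi_i$. Consequently, any chain witnessing an interior monomial must, at each level, include an element dominating all covers of the previous level, whose degree is therefore at least $\deg \xi_{i+1}$. An induction on $|\Pi|$ using the sub-ASL structure from Lemma \ref{lem: sub-ASL} (peeling off the unique maximum of $\Pi$) then shows that $\prod_{i=1}^m \xi_i$ is an interior generator of minimal total degree $\sum_{i=1}^m \deg \xi_i$.

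The main obstacle will be verifying that no interior monomial of total degree strictly less than $\sum_{i=1}^m \deg \xi_i$ exists. This requires a chain-by-chain comparison in which distributivity enters essentially: for any chain that deviates from $\mathcal{P}(\Pi)$ at step $i$, one must exhibit additional elements forced into the support whose degrees collectively compensate, using that joins exist uniquely and behave well with the covering relation. Once this minimization is settled, the formula $a(R) = -\sum_{i=1}^m \deg \xi_i$ follows immediately from the definition of the $a$-invariant.
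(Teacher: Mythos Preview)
This theorem is not proved in the paper at all; it is quoted verbatim from Bruns--Herzog \cite{MR1188581} and then applied as a black box to compute $a(G_\gamma)$ in Corollary~\ref{cor: a-invariant schubert}. There is therefore no in-paper argument against which to compare your proposal.

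As for the proposal itself, the overall strategy---deform to the discrete ASL, describe $\omega$ combinatorially, and locate its minimal-degree generator---is indeed the route taken in the original Bruns--Herzog paper, so you are reconstructing the right argument. Two points in your sketch need tightening. First, the principal chain $\xi_1,\dots,\xi_m$ is generally \emph{not} a maximal chain of $\Pi$: whenever $\xi_i$ has more than one cover, $\xi_{i+1}$ lies strictly above each of them, so the picture of ``interior monomials supported on maximal chains, with the principal chain the cheapest'' is not quite right. The correct object is a monomial whose support meets every interval in a suitable sense, not a saturated chain. Second, your key step (``any chain witnessing an interior monomial must, at each level, include an element dominating all covers of the previous level, whose degree is therefore at least $\deg\xi_{i+1}$'') silently assumes that degree is monotone under taking joins, which is not part of the ASL axioms. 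In the original proof this minimization is handled by a more careful combinatorial argument exploiting the lattice structure, not by a direct degree comparison; that is where the real work lies, and your outline currently waves at it rather than doing it.
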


Since we assume $G_\gamma$ has the standard grading, each $\deg\xi_i=1$, so (\ref{equ: a-invariant of schubert cycle}) becomes
\[a(G_\gamma) = -m = -\big|\mathcal{P}(\Gamma(X;\gamma))\big|,\]
where $\big|\mathcal{P}(\Gamma(X;\gamma))\big|$ is the cardinality of the principal chain of $\Gamma(X;\gamma)$. Thus, computing the $a$-invariant becomes a combinatorial problem in which we count the number of elements making up the chain $\mathcal{P}(\Gamma(X;\gamma))$. 

\subsection*{Construction of the Principal Chain}
Let $\gamma=[a_1,\dots,a_d]\in \Gamma(X)$, and $\gamma=[\beta_0,\dots,\beta_s]$ be its expression as a sequence of consecutive blocks in $\Gamma(X;\gamma)$. The join $\zeta\sqcup\eta$ of two elements $\zeta,\eta\in \Gamma(X)$ is by taking the maximum of their corresponding entries. We describe the algorithm that constructs the principal chain of $\mathcal{P}(\Gamma(X;\gamma))=\xi_1,\dots,\xi_m$. 
\begin{itemize}
    \item The only minimal element of $\Gamma(X;\gamma)$ is $\gamma=[a_1,\dots,a_d]$, hence $\xi_1=\gamma$ is the first element of $\mathcal{P}(\Gamma(X;\gamma))$.

    \item Suppose that $\xi_1,\xi_2,\dots,\xi_i$ is defined and $\xi_i\neq[n-d+1,\ldots, n]$. 
    Let $\xi_i=[\nu_0^{(i)},\nu_1^{(i)},\dots,\nu_{s_i}^{(i)}]$ be the block decomposition of $\xi_i$ and set $t_i=s_i$ if the last entry of $\nu_{s_i}^{(i)}$ is not $n$ and $t_i=s_i-1$ otherwise. Increase the last entry of each $\nu_j^{(i)}$ by 1 for $0\le j\le t_i$.

    \item The process stops after a finite number of steps when one reaches 
    \[[n-d+1,\dots,n-1,\ n],\]
    which is the only maximal element of $\Gamma(X;\gamma)$. Thus, $\xi_m=[n-d+1,\dots,n-1,\ n]$ is always the last element of $\mathcal{P}(\Gamma(X;\gamma))$.
\end{itemize}

\begin{example}\label{ex: principal chain}
    Suppose $X$ is a $6\times11$ generic matrix (so $d=6,\ n=11$), and let
    $\gamma=[2,3,4,6,8,9]\in \Gamma(X)$. The principal chain $\mathcal{P}(\Gamma(X;\gamma))$ is:
    \begin{align*}
\gamma = \xi_1 &= [2,\ 3,\ 4,\ 6,\ 8,\ 9], \\
        \xi_2 &= [2,\ 3,\ \bm{5},\ \bm{7},\ 8,\ \bm{10}],\\
        \xi_3 &= [2,\ \bm{4},\ \bm{6},\ 7,\ \bm{9},\ \bm{11}],\\
        \xi_4 &= [\bm{3},\ \bm{5},\ 6,\ \bm{8},\ \bm{10},\ 11],\\
        \xi_5 &= [\bm{4},\ 5,\ \bm{7},\ \bm{9},\ 10,\ 11],\\
        \xi_6 &= [4,\ \bm{6},\ \bm{8},\ 9,\ 10,\ 11],\\
        \xi_7 &= [\bm{5},\ \bm{7},\ 8,\ 9,\ 10,\ 11],\\
        \xi_8 &= [\bm{6},\ 7,\ 8,\ 9,\ 10,\ 11]
    \end{align*}
    (boldfaced numbers are the previous entries that increased by 1). 
\end{example}

We define a twist operation on $\Gamma(X)$. For $a=[a_1,\dots,a_d]\in \Gamma(X)$, set $b_i=n-a_{d-i+1}+1$ for $1\le i\le d$, and define $\tilde{a}=[b_1,\dots,b_d]$. It is clear that $\tilde{a}\in \Gamma(X)$ and $\tilde{\tilde{a}}=a$. Further, if $c=[n-d+1,\dots,n-1,\ n]$, then $\tilde{c}=[1,2,\dots,d]$. 
By mapping this operation, the construction process of the principal chain becomes the following algorithm. 

\subsection*{Algorithm 2}
Let $[a_1,\dots,a_d]$ be an arbitrary tuple of positive integers where $1\le a_1<\cdots< a_d$. We define the algorithm for building a chain of $d$-tuples from $[a_1,a_2,\dots,a_d]$ to $[1,2,\dots,d]$ as follows:

Set $[a_1^{(1)},\dots,a_d^{(1)}]=[a_1,\dots,a_d]$. If $[a_1^{(1)},\dots,a_d^{(1)}],\dots,[a_1^{(i)},\dots,a_d^{(i)}]$ are defined and $[a_1^{(i)},\dots,a_d^{(i)}]\neq[1,\ldots,d]$, then define $[a_1^{(i+1)},\dots,a_d^{(i+1)}]$ by
\[a_j^{(i+1)} = 
\begin{cases}
    a_j^{(i)}-1, & \text{if $j=1$ and $a_1^{(i)}>1$, or $j>1$ and $a_j^{(i)}>a_{j-1}^{(i)}+1$} \\
    a_j^{(i)}, & \text{otherwise.}
\end{cases}\]

If one starts this algorithm from $\tilde{\gamma}$, then one gets the twisted principal chain $\tilde{\xi}_1,\tilde{\xi}_2,\dots,\tilde{\xi}_m$. In particular, the length of the principal chain and that of the chain obtained by Algorithm 2 are the same.

\begin{example}
    Let $\tilde{\gamma}=[3,4,6,8,9,10]$ be the twist of $\gamma$ from Example \ref{ex: principal chain}. The construction of the chain of tuples by Algorithm 2 is:
    \begin{eqnarray*}
        \tilde{\gamma}=\tilde{\xi}_1 &=& [3,\ 4,\ 6,\ 8,\ 9,\ 10], \\
        \tilde{\xi}_2 &=& [\bm{2},\ 4,\ \bm{5},\ \bm{7},\ 9,\ 10], \\
        \tilde{\xi}_3 &=& [\bm{1},\ \bm{3},\ 5,\ \bm{6},\ \bm{8},\ 10], \\
        \tilde{\xi}_4 &=& [1,\ \bm{2},\ \bm{4},\ 6,\ \bm{7},\ \bm{9}], \\
        \tilde{\xi}_5 &=& [1,\ 2,\ \bm{3},\ \bm{5},\ 7,\ \bm{8}], \\
        \tilde{\xi}_6 &=& [1,\ 2,\ 3,\ \bm{4},\ \bm{6},\ 8], \\
        \tilde{\xi}_7 &=& [1,\ 2,\ 3,\ 4,\ \bm{5},\ \bm{7}], \\
        \tilde{\xi}_8 &=& [1,\ 2,\ 3,\ 4,\ 5,\ \bm{6}]
    \end{eqnarray*}
    (boldfaced numbers are the previous entries that decreased by 1).
\end{example}

For $a=[a_1,\dots,a_d]\in \Gamma(X)$ with $a\ne[1,2,\dots,d]$, we define $m(a,j)=a_j-2j$ for $j$ with $a_j>j$, and $m(a)=\max\{m(a,j) : a_j>j\}$. Further, we denote the length of the sequence of elements in $\Gamma(X)$ obtained by Algorithm 2 starting from $a$ by $\tau(a)$. The following fact is easily seen.

\begin{lemma}\label{lem: m-values}
    If $j<a_j=a_{j+1}-1$, then $m(a,j+1)=m(a,j)-1$. In particular, if one decomposes $a$ into blocks as $a=[\alpha_0,\alpha_1,\dots,\alpha_u]$ where $\alpha_0$ is a special block defined by $\{a_r : a_r=r\}$, which may be empty, then $m(a)$ is obtained by one of the first entries of $\alpha_i$, $1\le i\le u$, and for the other entries of $a$ with $a_j>j$, $m(a,j)<m(a)$.
\end{lemma}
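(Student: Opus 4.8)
The plan is to dispose of the displayed identity by a one-line computation and then bootstrap it, run along each block, to obtain the structural claim about where $m(a)$ is attained.

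First I would verify the identity directly. Suppose $j<a_j$ and $a_j=a_{j+1}-1$, i.e.\ $a_{j+1}=a_j+1$. Since $a_j\ge j+1$ we get $a_{j+1}=a_j+1\ge j+2>j+1$, so $m(a,j+1)$ is defined, and
\[
m(a,j+1)=a_{j+1}-2(j+1)=(a_j+1)-2j-2=(a_j-2j)-1=m(a,j)-1,
\]
which is the first assertion.

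Next I would pin down the index set $\{j:a_j>j\}$. Because $a\in\Gamma(X)$ means $a_1<\cdots<a_d$ are distinct positive integers, we have $a_j\ge j$ for all $j$; moreover if $a_j=j$ for some $j$, then $a_1<\cdots<a_j$ all lie in $\{1,\dots,j\}$, forcing $a_r=r$ for all $r\le j$. Hence the special block $\alpha_0=\{a_r:a_r=r\}$ is an initial segment $a_1=1,\dots,a_{k_0}=k_0$ (possibly empty, $k_0=0$), and $\{j:a_j>j\}=\{k_0+1,\dots,d\}$, which is precisely the union of the blocks $\alpha_1,\dots,\alpha_u$. I would then run the first assertion along a single block: fix $\alpha_i=(a_{p_i},a_{p_i+1},\dots,a_{q_i})$, so $a_{k+1}=a_k+1$ and $a_k>k$ for $p_i\le k\le q_i$; repeated application of $m(a,k+1)=m(a,k)-1$ gives $m(a,p_i)>m(a,p_i+1)>\cdots>m(a,q_i)$, so $m(a,\cdot)$ is strictly decreasing on the block and attains its maximum over the block uniquely at the first entry $a_{p_i}$.

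Finally, since every $j$ with $a_j>j$ lies in a unique block $\alpha_i$ with $1\le i\le u$, we conclude $m(a)=\max\{m(a,p_i):1\le i\le u\}$, attained at a first entry of some $\alpha_i$; and if $j$ is not a first entry, say $j$ lies in $\alpha_i$ with $j>p_i$, then $m(a,j)<m(a,p_i)\le m(a)$, which is the last assertion. There is no real obstacle here: the only point needing a moment's care is the elementary observation that $\alpha_0$ is an initial segment, so that $\{j:a_j>j\}$ is exactly $\alpha_1\cup\cdots\cup\alpha_u$; everything else is the arithmetic identity propagated along each consecutive run.
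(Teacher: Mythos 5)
Your proof is correct and is exactly the computation that the paper dismisses as "easily seen": the one-line arithmetic $m(a,j+1)=(a_j+1)-2(j+1)=m(a,j)-1$, the observation that $\alpha_0$ must be an initial segment $1,\dots,k_0$ so that $\{j:a_j>j\}$ is precisely $\alpha_1\cup\cdots\cup\alpha_u$, and then strict decrease of $m(a,\cdot)$ along each block. The paper supplies no proof for this lemma, and your argument fills it in the natural way.
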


Next, we show the following.

\begin{theorem}\label{thm: total count of tuples}
    For $a=[a_1,\dots,a_d]\in \Gamma(X)$ with $a\ne[1,2,\dots,d]$, it holds that 
    \[\tau(a) = m(a)+d+1.\]
\end{theorem}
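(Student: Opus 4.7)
The plan is to induct on $\tau(a)$, reducing everything to a key lemma that a single step of Algorithm~2 decreases $m$ by exactly one. Writing $a'$ for the successor of $a$ under the algorithm, we have $\tau(a) = \tau(a')+1$ by definition, so it suffices to (i) handle the base case $\tau(a)=2$ directly, and (ii) show that for every other $a \ne [1,\ldots,d]$ one has $a' \ne [1,\ldots,d]$ together with $m(a') = m(a)-1$. The theorem then follows immediately by induction.

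For the base case, a short inspection shows that $a' = [1,\ldots,d]$ can happen only when $a = [1,2,\ldots,d-1,d+1]$: every non-first position of $a$ must already lie in the special initial segment and the sole non-special block must be a singleton at position $d$. For this $a$, $m(a) = (d+1) - 2d = 1-d$, so $m(a) + d + 1 = 2 = \tau(a)$, as required.

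The key lemma decomposes into an upper and a lower bound on $m(a')$. The upper bound is essentially routine: by construction $a'_j = a_j$ at every position $j$ except the first positions of non-special blocks of $a$, where $a'_j = a_j - 1$. Thus for any $j$ with $a'_j > j$, $m(a',j)$ equals either $m(a,j)$ or $m(a,j)-1$. Invoking Lemma~\ref{lem: m-values}, which says $m(a,\cdot)$ attains its maximum at first positions of blocks and strictly decreases along each block, I get $m(a',j) \le m(a)-1$ in either case.

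The hard part will be the lower bound, i.e.\ exhibiting a position $j$ with $m(a',j) = m(a)-1$. Fix any $k$ with $m_k := m(a,j_k) = m(a)$. If $\alpha_k$ has size at least two, then $j = j_k + 1$ is a non-first position within $\alpha_k$ and Lemma~\ref{lem: m-values} yields $m(a',j_k+1) = m(a,j_k+1) = m_k - 1$. Otherwise every maximizing $\alpha_k$ is a singleton; if some such $k$ also satisfies $a_{j_k} \ge j_k + 2$, then $a'_{j_k} > j_k$ and $j = j_k$ gives $m(a',j_k) = m_k - 1$. The delicate remaining subcase is when every maximizing $\alpha_k$ is a singleton with $a_{j_k} = j_k + 1$; the main obstacle is to prove that this configuration forces $a = [1,\ldots,d-1,d+1]$, contradicting the standing hypothesis. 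The argument is that $a_{j_k} = j_k + 1$ together with block separation (gap $\ge 2$) and the strict monotonicity of $a$ forces $a_{j_k - 1} = j_k - 1$ and cascades downward to $a_j = j$ for every $j < j_k$, so such a $k$ must equal $1$; if a second non-special block $\alpha_2$ existed it would have to start at $j_1 + 1$ with value $\ge j_1 + 3$, yielding $m_2 = m(a)$ but with $a_{j_2} \ge j_2 + 2$, violating the singleton-plus-$a_{j_k} = j_k + 1$ assumption on every maximizing block. Hence $\alpha_1$ is the only non-special block, and $a = [1,\ldots,d-1,d+1]$, which is precisely the excluded case. With the key lemma established, the induction on $\tau(a)$ closes the argument at once.
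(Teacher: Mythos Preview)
Your proof is correct and follows essentially the same approach as the paper's: induct on $\tau(a)$, verify the base case $\tau(a)=2$ directly, and reduce the inductive step to the key lemma that one application of Algorithm~2 decreases $m$ by exactly one. The only difference is organizational: the paper proves the key lemma by three cases on the \emph{first} non-special block $\alpha_1$ (according to whether $a_{r_1}>r_1+1$, or $a_{r_1}=r_1+1$ with $|\alpha_1|\ge 2$, or $a_{r_1}=r_1+1$ with $\alpha_1$ a singleton), whereas you split the lemma into an upper bound (handled uniformly via Lemma~\ref{lem: m-values}) and a lower bound obtained by exhibiting a witness near a \emph{maximizing} block; your residual sub-case analysis then shows the only obstruction is $a=[1,\dots,d-1,d+1]$, which is exactly the base case.
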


\begin{proof}
    We prove by induction on $\tau(a)$. Since $a\ne[1,2,\dots,d]$, there are at least 2 elements $a$ and $[1,2,\dots,d]$ in the sequence obtained by Algorithm 2 starting from $a$. Therefore, we begin to consider the case where $\tau(a)=2$. If $\tau(a)=2$, then $a=[1,2,\dots,d-1,d+1]$. Thus $m(a)=1-d$, and $m(a)+d+1=2=\tau(a)$.

    Next, suppose that $\tau(a)>2$ and let $b=[b_1,\dots,b_d]$ be the element obtained from Algorithm 2 from $a$.
    Then, $b\neq[1,2,\ldots, d]$, $\tau(b)=\tau(a)-1$ and $\tau(b)=m(b)+d+1$ by induction hypothesis.
    Let $a=[\alpha_0,\alpha_1,\dots,\alpha_u]$ be the block decomposition of Lemma \ref{lem: m-values}, $\alpha_i=a_{r_i},\dots,a_{s_i}$ for $1\le i\le u$. Then by Lemma \ref{lem: m-values}, 
    \[m(a)=\max\{m(a,r_i) : 1\le i\le u\},\] 
    and $m(a,j)<m(a)$ for $j$ with $r_1\le j\le d$ and $j\ne r_i$ for $1\le i\le u$. We consider by separating into three cases. 
    
    $\,$

    \noindent{{\bf Case 1:} $a_{r_1}>r_1+1$.} 
    
    In this case it holds that $b_{r_1}>r_1$. Further, $m(b,j)=m(a,j)<m(a)$ for $j$ with $j\ne r_i$ for any $i$ by the first part of Lemma \ref{lem: m-values}. On the other hand, $m(b,r_i)=m(a,r_i)-1\le m(a)-1$ for any $i$ with $1\le i\le u$. Thus, $m(b)\le m(a)-1$. On the other hand,
    \begin{align*}
        m(a) &= \max\{m(a,r_i) : 1\le i\le u\} \\
        &= \max\{m(b,r_i) : 1\le i\le u\} + 1 \\
        &\le m(b) + 1.
    \end{align*}
    Therefore, we see that $m(a)=m(b)+1$ and 
    \begin{align*}
        m(a)+d+1 &= (m(b)+d+1)+1 \\
        &= \tau(b) + 1 \\
        &= \tau(a).
    \end{align*}
    
    $\,$

    \noindent{{\bf Case 2:} $a_{r_1}=r_1+1$ and $a_{r_1+1}=r_1+2$.} 

    In this case, $b_{r_1}=r_1$, $b_{r_1+1}=r_1+2$. Therefore, by the same argument as in Case 1, we see that 
    \[m(b)=\max\big(\{m(b,r_i) : 2\le i\le u\}\cup\{m(b,r_1+1)\}\big)=m(a)-1,\]
    since $m(b,r_1+1)=m(a,r_1)-1$. The remaining arguments are the same as that of Case 1.

    $\,$
    
    \noindent{{\bf Case 3:} $a_{r_1}=r_1+1$ and $a_{r_1+1}>r_1+2$.} 

    In this case $r_2=r_1+1$ and $m(a,r_2)=m(a,r_1+1)\ge m(a,r_1)$. Therefore, $m(a) = \max\{m(a,r_i) : 2\le i\le u\}$. Further, by the same argument as in Case 1, we see that $m(b) = \max\{m(b,r_i) : 2\le i\le u\}$. The remaining arguments are the same as those of Case 1.
\end{proof}

By Theorem \ref{thm: total count of tuples}, Lemma \ref{lem: m-values}, and Theorem \ref{thm: ASL a-invariant}, we see the following.

\begin{corollary}\label{cor: a-invariant schubert}
    Let $\gamma=[a_1,\dots,a_d]\in \Gamma(X)$ and $\gamma=[\beta_0,\dots,\beta_s]$ be its decomposition into blocks of consecutive integers. Let $t=s$ if $a_d<n$, and $t=s-1$ if $a_d=n$. The $a$-invariant of the associated Schubert cycle $G_\gamma$ satisfies  
    \[-a(G_\gamma)=
    \begin{cases}
        1, & \text{if $\gamma=[n-d+1,\dots,n-1,n]$,} \\
        \kappa, & \text{otherwise.}
    \end{cases}\]
\end{corollary}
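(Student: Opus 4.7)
The plan is to combine Theorem \ref{thm: ASL a-invariant} with Theorem \ref{thm: total count of tuples}, reducing the corollary to the combinatorial identity $m(\tilde\gamma) + d + 1 = \kappa$. Since $G_\gamma$ is standard graded on $\Gamma(X;\gamma)$, Theorem \ref{thm: ASL a-invariant} yields $-a(G_\gamma) = |\mathcal{P}(\Gamma(X;\gamma))|$, and the discussion preceding Algorithm 2 identifies this length with $\tau(\tilde\gamma)$. In the boundary case $\gamma = [n-d+1,\ldots,n]$ the poset $\Gamma(X;\gamma)$ is a singleton, so the principal chain has length $1$ and $-a(G_\gamma) = 1$ immediately. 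In all other cases $\tilde\gamma \ne [1,\ldots,d]$, and Theorem \ref{thm: total count of tuples} gives $\tau(\tilde\gamma) = m(\tilde\gamma) + d + 1$; the corollary therefore reduces to showing $m(\tilde\gamma) = \kappa - d - 1$.

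To prove this identity I would first establish the block correspondence between $\tilde\gamma$ and $\gamma$. From $b_{j+1} - b_j = a_{d-j+1} - a_{d-j}$ one sees that the block decomposition of $\tilde\gamma$ is the reverse of that of $\gamma$, with the same sequence of sizes. Moreover the special block $\alpha_0$ of $\tilde\gamma$, defined by $\{b_j : b_j = j\}$, is nonempty exactly when $a_d = n$, in which case it absorbs the image of $\beta_s$. Under either convention the non-special blocks of $\tilde\gamma$ thus correspond bijectively to $\beta_0,\ldots,\beta_t$, and Lemma \ref{lem: m-values} guarantees that $m(\tilde\gamma)$ is realized at the first entry of one such block.

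Next I would compute $m(\tilde\gamma, j)$ explicitly at the index $j = d - k_{i+1} + 1$ attached to the block of $\tilde\gamma$ coming from $\beta_i$. Using $b_j = n + 1 - a_{k_{i+1}}$, the expression of the last entry of $\beta_i$ as $(a_1 - 1) + \sum_{\ell=0}^{i}|\beta_\ell| + \sum_{\ell=0}^{i-1}|\chi_\ell|$, and the total count $n - d = (a_1 - 1) + \sum_{\ell=0}^{t}|\chi_\ell|$, the quantity $b_j - 2j$ simplifies to $\kappa_i - d - 1$. Maximizing over $0 \le i \le t$ and applying Lemma \ref{lem: m-values} then yields $m(\tilde\gamma) = \kappa - d - 1$. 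The main obstacle I anticipate is not this substitution but the bookkeeping of the block correspondence above, specifically verifying that the special block of $\tilde\gamma$ absorbs exactly $\beta_s$ when $a_d = n$, which is the combinatorial reason for the two cases $t = s$ and $t = s - 1$ in the statement.
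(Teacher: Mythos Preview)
Your proposal is correct and follows essentially the same approach as the paper: both reduce via Theorem~\ref{thm: ASL a-invariant} and Theorem~\ref{thm: total count of tuples} to evaluating $m(\tilde\gamma)+d+1$, identify the relevant indices as $j=d-k_{i+1}+1$ using Lemma~\ref{lem: m-values}, and simplify $b_j-2j$ to $\kappa_i-d-1$. The paper carries out the same simplification slightly differently, writing $n-a_{k_{i+1}}+1-2(d-k_{i+1}+1)+d+1 = k_{i+1} + \big((n-a_{k_{i+1}})-(d-k_{i+1})\big)$ and then recognizing the two summands as $\sum_{j=0}^i|\beta_j|$ and $\sum_{j=i}^t|\chi_j|$ directly, whereas you substitute the block/gap expression for $a_{k_{i+1}}$ and the identity $n-d=(a_1-1)+\sum_{\ell=0}^t|\chi_\ell|$; these are equivalent bookkeeping choices, and your more explicit treatment of the block correspondence (including the absorption of $\beta_s$ into the special block when $a_d=n$) is exactly what justifies the paper's unadorned index range $0\le i\le t$.
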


\begin{proof}
    The case where $\gamma=[n-d+1,\dots,n-1,n]$ is a direct consequence of Theorem \ref{thm: ASL a-invariant}. 
    (In this case, $G_\gamma$ is isomorphic to the polynomial ring with one variable over $\Bbbk$.) 
    We assume that $\gamma\ne[n-d+1,\dots,n-1,n]$. By Theorem \ref{thm: total count of tuples}, Lemma \ref{lem: m-values}, and Theorem \ref{thm: ASL a-invariant}, we see that 
    \begin{align*}
        -a(G_\gamma) &= d+1+\max\{n-a_{k_{i+1}}+1-2(d-k_{i+1}+1) : 0\le i\le t\} \\
        &= \max\{k_{i+1} + ((n-a_{k_{i+1}})-(d-k_{i+1})) : 0\le i\le t\}.
    \end{align*}
    By the definition of block decomposition $[\beta_0,\dots,\beta_s]$ of $\gamma$, we see that
    \[k_{i+1}=\sum_{j=0}^i|\beta_i| \quad\text{and}\]
    \[(n-a_{k_{i+1}})-(d-k_{i+1}) = \left(\sum_{j=i+1}^s|\beta_j| + \sum_{j=i}^t|\chi_j|\right) - \sum_{j=i+1}^s|\beta_j|
        = \sum_{j=i}^t|\chi_j|\]
    for any $i$. Therefore, it follows that
    \[-a(G_\gamma)=\max\left\{\sum_{j=0}^i |\beta_j|+\sum_{j=i}^t |\chi_j| : 0\leq i\leq t\right\}=\kappa.\]
\end{proof}

Since $\kappa_{i-1}-\kappa_i=|\chi_{i-1}|-|\beta_i|$ for $1\le i\le t$, we see by \cite[Theorem 8.12]{MR986492} that $J_0^{(\kappa_0)}\cap\cdots\cap J_t^{(\kappa_t)}$ is isomorphic to the canonical module of $G_\gamma$ if we ignore the grading. Further by \cite[Corollary 9.18]{MR986492}, we see that $J_i^{(\kappa_i)}=J_i^{\kappa_i}$ for $0\le i\le t$. Moreover, by Proposition \ref{prop: same gen degree}, the intersection $J_0^{\kappa_0}\cap\cdots\cap J_t^{\kappa_t}$ is generated in degree $\max\{\kappa_i : 0\le i\le t\}=\kappa$. Therefore, we see the following.

\begin{proposition}\label{prop: graded canonical module}
    The graded canonical module $\omega$ of $G_\gamma$ is 
    \[J_0^{\kappa_0}\cap\cdots\cap J_t^{\kappa_t}=J_0^{(\kappa_0)}\cap\cdots\cap J_t^{(\kappa_t)}.\]
\end{proposition}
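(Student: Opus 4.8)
The plan is to assemble the statement from three facts that have already been established in the preceding paragraph, and the work reduces to tracking the grading carefully. First I would invoke \cite[Theorem 8.12]{MR986492}: since $\kappa_{i-1}-\kappa_i = |\chi_{i-1}|-|\beta_i|$ for $1\le i\le t$, the divisorial ideal $\bigcap_{i=0}^t J_i^{(\kappa_i)}$ is isomorphic, as an \emph{ungraded} $G_\gamma$-module, to the canonical module $\omega$ of $G_\gamma$. Then I would replace the symbolic powers with ordinary powers using \cite[Corollary 9.18]{MR986492}, which gives $J_i^{(\kappa_i)}=J_i^{\kappa_i}$ for each $0\le i\le t$; this yields the equality $J_0^{\kappa_0}\cap\cdots\cap J_t^{\kappa_t}=J_0^{(\kappa_0)}\cap\cdots\cap J_t^{(\kappa_t)}$ claimed in the second half of the statement.

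The remaining point — and the only one requiring a genuine argument — is to pin down the grading shift, i.e.\ to show that the abstract isomorphism of the first step can be taken to be degree-preserving, so that $\omega$ is \emph{literally} this intersection rather than a twist of it. Here I would combine two computations: by Lemma \ref{lem: straightening-closed-schubert} each $\Omega_i$ is straightening-closed, so Proposition \ref{prop: same gen degree} applies to $I_i=J_i$ and $u_i=\kappa_i$, showing that $\bigcap_{i=0}^t J_i^{\kappa_i}$ is generated in the single degree $\max\{\kappa_i:0\le i\le t\}=\kappa$. On the other hand, Corollary \ref{cor: a-invariant schubert} (valid since we may assume $\gamma\ne[n-d+1,\dots,n-1,n]$, the excluded case being trivial or handled separately) gives $a(G_\gamma)=-\kappa$, which by definition of the $a$-invariant means that the least degree in which $\omega$ is nonzero is $\kappa$. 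Since $G_\gamma$ is Cohen--Macaulay and hence $\omega$ is a maximal Cohen--Macaulay module generated in a single degree (being, up to twist, a divisorial ideal generated in one degree), the two modules $\bigcap_{i=0}^t J_i^{\kappa_i}$ and $\omega$ are isomorphic up to a degree shift and both have their minimal generators concentrated in degree $\kappa$; therefore the shift is trivial and the isomorphism is homogeneous. This identifies $\bigcap_{i=0}^t J_i^{\kappa_i}$ with the graded canonical module on the nose.

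I expect the main obstacle to be precisely this matching of gradings: \cite[Theorem 8.12]{MR986492} is (or may be) stated only up to ungraded isomorphism, so one must supply the argument that the generating degree of the intersection coincides with $-a(G_\gamma)$, and then argue that a degree-shifted copy of the canonical module generated in degree $\kappa$ can only be the canonical module itself. All other ingredients — the symbolic-versus-ordinary power identity, the straightening-closedness, the generating-degree computation — are direct citations of results already in hand, so once the grading bookkeeping is settled the proposition follows immediately.
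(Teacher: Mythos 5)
Your proposal is correct and follows essentially the same route as the paper: cite \cite[Theorem 8.12]{MR986492} for the ungraded isomorphism, \cite[Corollary 9.18]{MR986492} to replace symbolic powers by ordinary powers, and Proposition \ref{prop: same gen degree} together with Corollary \ref{cor: a-invariant schubert} to pin down the degree as $\kappa=-a(G_\gamma)$. The only difference is that you spell out the grading-normalization step (ungraded isomorphism of divisorial ideals plus a common single generating degree forces the graded isomorphism with no shift), which the paper leaves implicit in the sentence "Therefore, we see the following."
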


\section{\texorpdfstring{The $F$-pure Threshold}{F-pure Threshold}}

In this section we consider the degree of the symbolic powers of anticanonical modules and $F$-pure threshold of $G_\gamma$. 

Let $m$ be a positive integer. Then since $\omega=J_0^{(\kappa_0)}\cap\cdots\cap J_t^{(\kappa_t)}$, we see that $\omega^{(-m)}=J_0^{(-m\kappa_0)}\cap\cdots\cap J_t^{(-m\kappa_t)}$. On the other hand, since $\gamma G_\gamma=J_0\cap\cdots\cap J_t$ by (\ref{equ: principal intersection}), we see
\[\gamma^{m\kappa}\omega^{(-m)}=J_0^{(m(\kappa-\kappa_0))}\cap\cdots\cap J_t^{(m(\kappa-\kappa_t))}.\]
Since each $m(\kappa-\kappa_i)$ is a non-negative integer, it follows from Proposition 
\ref{prop: same gen degree} and \cite[Corollary 9.18]{MR986492} that $\gamma^{m\kappa}\omega^{(-m)}$ is generated in degree 
\[\max\{m(\kappa-\kappa_i) : 0\le i\le t\} = m(\kappa-\kappa').\]
Since $\gamma$ is an element of $G_\gamma$ of degree 1, we have the following.

\begin{proposition}\label{prop: degree of sym power canonical}
    Let $\omega$ be the graded canonical module of $G_\gamma$ and $m$ a positive integer. Then $\omega^{(-m)}$ is generated in degree $-m\kappa'$. 
\end{proposition}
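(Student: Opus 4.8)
The plan is to reduce the statement entirely to the identification of $\omega$ from Proposition \ref{prop: graded canonical module} together with the fact that $\gamma$ is a degree-$1$ element whose principal (divisorial) decomposition is $\gamma G_\gamma = J_0 \cap \cdots \cap J_t$, as recorded in (\ref{equ: principal intersection}). Concretely, I would first write $\omega = J_0^{(\kappa_0)} \cap \cdots \cap J_t^{(\kappa_t)}$ and take its $(-m)$-th power in the divisor class group $\div(G_\gamma)$; since taking divisorial powers distributes over finite intersections of divisorial ideals supported on distinct height-one primes (the $J_i$ are distinct height-$1$ primes by the discussion following (\ref{equ: principal intersection})), this yields $\omega^{(-m)} = J_0^{(-m\kappa_0)} \cap \cdots \cap J_t^{(-m\kappa_t)}$. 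Multiplying through by the principal divisor $\gamma^{m\kappa} G_\gamma = (J_0 \cap \cdots \cap J_t)^{m\kappa}$ and using that $J_i^{(m\kappa)} \cdot J_i^{(-m\kappa_i)} = J_i^{(m(\kappa - \kappa_i))}$ in $\div(G_\gamma)$, I get
\[
\gamma^{m\kappa}\,\omega^{(-m)} = J_0^{(m(\kappa-\kappa_0))} \cap \cdots \cap J_t^{(m(\kappa-\kappa_t))}.
\]
Here each exponent $m(\kappa - \kappa_i)$ is a non-negative integer because $\kappa = \max_i \kappa_i$, so every factor on the right is an honest ideal of $G_\gamma$ rather than a fractionary ideal.

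Next I would invoke \cite[Corollary 9.18]{MR986492} to replace each symbolic power $J_i^{(m(\kappa-\kappa_i))}$ by the ordinary power $J_i^{m(\kappa-\kappa_i)} = (\Omega_i G_\gamma)^{m(\kappa-\kappa_i)}$, and then apply Proposition \ref{prop: same gen degree} with the straightening-closed poset ideals $\Omega_0,\dots,\Omega_t$ (straightening-closedness is Lemma \ref{lem: straightening-closed-schubert}) and exponents $u_i = m(\kappa - \kappa_i)$. That proposition tells me the intersection is generated in degree $\max_i u_i = \max_i m(\kappa - \kappa_i) = m(\kappa - \min_i \kappa_i) = m(\kappa - \kappa')$. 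Hence $\gamma^{m\kappa}\omega^{(-m)}$ is generated in degree $m(\kappa - \kappa')$.

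Finally, since $\gamma$ is a homogeneous non-zerodivisor of degree $1$ in $G_\gamma$, multiplication by $\gamma^{m\kappa}$ is a degree-$(m\kappa)$ isomorphism of graded modules from $\omega^{(-m)}$ onto $\gamma^{m\kappa}\omega^{(-m)}$; therefore $\omega^{(-m)}$ is generated in degree $m(\kappa-\kappa') - m\kappa = -m\kappa'$, which is the claim.

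I expect the only genuinely delicate point to be bookkeeping the grading shift correctly: one must be careful that the equalities $\omega^{(-m)} = \bigcap_i J_i^{(-m\kappa_i)}$ and $\gamma^{m\kappa}\omega^{(-m)} = \bigcap_i J_i^{(m(\kappa-\kappa_i))}$ hold as \emph{graded} fractionary ideals inside $S^{-1}G_\gamma$, not merely up to an overall shift — this is exactly what Proposition \ref{prop: graded canonical module} (the graded identification of $\omega$) and (\ref{equ: principal intersection}) (the graded principal decomposition of $\gamma$) guarantee. Everything else — distributivity of divisorial powers over intersections at distinct height-one primes, the passage from symbolic to ordinary powers, and the generation degree of the intersection — is supplied verbatim by the results already established above.
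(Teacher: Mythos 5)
Your proposal matches the paper's argument essentially line for line: identify $\omega = \bigcap_i J_i^{(\kappa_i)}$ via Proposition \ref{prop: graded canonical module}, take $(-m)$-th divisorial power, clear denominators by multiplying by $\gamma^{m\kappa}$ using (\ref{equ: principal intersection}), pass from symbolic to ordinary powers via \cite[Corollary 9.18]{MR986492}, apply Proposition \ref{prop: same gen degree} to read off the generation degree $m(\kappa-\kappa')$, and finally shift by $-m\kappa$ since $\deg\gamma=1$. The extra remarks you add (distributivity of divisorial powers over the height-one components, and that the identifications are graded rather than only up to shift) are accurate clarifications of points the paper treats implicitly; there is no gap.
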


By Theorem \ref{thm: fpt as limit}, we have the following.

\begin{theorem}\label{thm: F-pure threshold}
    Suppose that the underlying field of $G_\gamma$ is $F$-finite of characteristic $p$. Let $\mathfrak{m}$ be the irrelevant maximal ideal of $G_\gamma$. Then it holds that $\fpt(\mathfrak{m})=\kappa'$.
\end{theorem}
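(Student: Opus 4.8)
The plan is to feed Proposition \ref{prop: degree of sym power canonical} into Theorem \ref{thm: fpt as limit} and extract the limit. Fix $e > 0$ and apply Proposition \ref{prop: degree of sym power canonical} with $m = p^e - 1$: the graded module $\omega^{(1-p^e)}$ is generated in degree $-(p^e-1)\kappa'$. Since a module generated in a single degree has only that one generating degree, the set $\{d_1^{(e)}, \dots, d_{r_e}^{(e)}\}$ in the statement of Theorem \ref{thm: fpt as limit} is the singleton $\{(p^e-1)\kappa'\}$. Hence Theorem \ref{thm: fpt as limit} forces $\nu_e(\mathfrak{m}) = (p^e-1)\kappa'$ for every $e$.

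With this exact value of $\nu_e(\mathfrak{m})$ in hand, the $F$-pure threshold is immediate from its definition:
\[
\fpt(\mathfrak{m}) = \lim_{e\to\infty} \frac{\nu_e(\mathfrak{m})}{p^e} = \lim_{e\to\infty} \frac{(p^e-1)\kappa'}{p^e} = \kappa'.
\]
One should double-check that the hypotheses of Theorem \ref{thm: fpt as limit} are met: $G_\gamma$ is a quotient of a standard graded polynomial ring over the (algebraically closed, hence $F$-finite) field $\Bbbk$ of characteristic $p$, it is a normal domain since it is an ASL on a distributive lattice arising as a Schubert cycle, and it is $F$-pure by \cite{MR799251} as recorded in Subsection \ref{sub-sec: schubert cycles}. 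The graded canonical module $\omega$ is the one identified in Proposition \ref{prop: graded canonical module}, so $\omega^{(1-p^e)}$ is exactly the anticanonical symbolic power to which Proposition \ref{prop: degree of sym power canonical} applies.

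I do not expect a genuine obstacle here: all the substantive work — the identification of $\omega$ with the intersection $\bigcap_i J_i^{(\kappa_i)}$, the straightening-closed computation of its generating degree, and the $a$-invariant calculation that pins down $\kappa$ — has already been carried out in the preceding sections, and the contribution of this final theorem is just to assemble those pieces. The only point requiring a modicum of care is that Theorem \ref{thm: fpt as limit} only asserts $\nu_e(\mathfrak{m})$ lies \emph{among} the generating degrees of $\omega^{(1-p^e)}$; it is precisely because Proposition \ref{prop: degree of sym power canonical} shows there is a single generating degree that no ambiguity remains, and this is the one place where the sharpness of the earlier degree computation (via Proposition \ref{prop: same gen degree}) is essential.
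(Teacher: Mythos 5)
Your proof is correct and follows exactly the route taken in the paper: apply Proposition \ref{prop: degree of sym power canonical} with $m=p^e-1$ to pin down the unique generating degree of $\omega^{(1-p^e)}$, invoke Theorem \ref{thm: fpt as limit} to conclude $\nu_e(\mathfrak{m})=(p^e-1)\kappa'$, and pass to the limit. Your extra remarks making explicit that the single generating degree forces a singleton in Theorem \ref{thm: fpt as limit}, and verifying that $G_\gamma$ satisfies its hypotheses, are accurate but go slightly beyond what the paper writes; the substance is identical.
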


\begin{proof}
    Let $e$ be a positive integer. By Proposition \ref{prop: degree of sym power canonical}, we see that $\omega^{(1-p^e)}$ is generated in degree $\kappa'(1-p^e)$. Thus, by Theorem \ref{thm: fpt as limit} we see that $\nu_e(\mathfrak{m})=\kappa'(p^e-1)$. Therefore,
    \[\fpt(\mathfrak{m})=\lim_{e\to\infty}\frac{\nu_e(\mathfrak{m})}{p^e}=\kappa'.\]
\end{proof}

\begin{remark}
Let $S$ be a polynomial ring where $G_\gamma\subset S$. With respect to the diagonal term order on $S$, the initial algebra $\mathrm{in}(G_\gamma)$ is the Hibi ring $\mathcal{R}_\Bbbk(\Gamma(X;\gamma))$ on the poset $\Gamma(X;\gamma)$ (see \cite{MR2356307}). We observe that $G_\gamma$ and $\mathcal{R}_\Bbbk(\Gamma(X;\gamma))$ both share the same $a$-invariant and $F$-pure threshold. 

By the structure theorem of Birkhoff, $\Gamma(X;\gamma)=J(P)$ is the lattice of poset ideals of $P$, where $P$ is the poset of join irreducibles of $\Gamma(X;\gamma)$. Let $-\infty$ and $\infty$ be two new elements outside of $P$ such that $-\infty<x<\infty$ for all $x\in P$. Define $[-\infty,\infty]=\{x\in P:-\infty\le x\le\infty\}$. For an arbitrary poset $\Pi$, if $x,y\in\Pi$ such that $x\le y$, a \emph{saturated chain} from $x$ to $y$ is a sequence of elements $z_0,z_1,\dots,z_t$ of $\Pi$ such that\
\[x=z_0\lessdot z_1\lessdot\cdots\lessdot z_t=y\]
(recall that $z_i\lessdot z_{i+1}$ means $z_{i+1}$ is a cover of $z_i$). The length of this chain is $t$. Next, define
\[\dist(-\infty,\infty)=\min\{t:\text{there is a saturated chain in $P$ from $-\infty$ to $\infty$ of length $t$}\}.\]
Recalling the fact that $\mathrm{in}(G_\gamma)$ is a level and anticanonical level Hibi ring on the poset $\Gamma(X;\gamma)$, it follows from \cite[Theorem 3.11]{MR4125854} that for any $n\in\Z$, the module $\omega_{\mathrm{in}(G_\gamma)}^{(n)}$ is generated in degree $n\cdot\rank[-\infty,\infty]$ if $n>0$, and $n\cdot\dist(-\infty,\infty)$ if $n<0$. Therefore, the $a$-invariant and $F$-pure threshold of $\mathrm{in}(G_\gamma)$ are respectively
\[-a(\mathrm{in}(G_\gamma))=\rank[-\infty,\infty] \quad\text{and}\quad \fpt(\mathfrak{m})=\dist(-\infty,\infty).\]
In fact, both quantities follow from the general results of \cite{MR3354064}.
These are equal to the $a$-invariant and $F$-pure threshold of $G_\gamma$ as well.
\end{remark}

$\,$

$\,$

\bibliographystyle{alpha}
\bibliography{bibliography.bib}

\end{document}